\documentclass[12pt]{amsart}
\usepackage[utf8]{inputenc}
\usepackage[english]{babel}

\usepackage[margin=2cm]{geometry}
\usepackage{graphicx}
\usepackage{mathtools}
\usepackage{amsmath,amsthm}
\usepackage{amsfonts,amssymb}
\usepackage{amscd}
\usepackage{amsthm}
\usepackage{verbatim}
\usepackage{tikz}
\usepackage[all,cmtip]{xy}
\usepackage{tikz-cd}
\usepackage{geometry}
\usepackage{makecell,xcolor}
\usepackage{float}
\usepackage{subcaption} 
\usepackage{eucal}
\usepackage{mathalfa}
\usepackage{mathrsfs}
\usepackage{fancyhdr}
\fancyhf{}
\fancyhead[R]{\thepage}
\theoremstyle{plain}
\usepackage{setspace}
\setstretch{1.5}

\addtolength{\textheight}{-\baselineskip}
\addtolength{\footskip}{\baselineskip}


\sloppy

\tikzstyle{slice}=[text=gray]

\usepackage{xcolor}

\usepackage{enumitem}
\frenchspacing

\pagestyle{plain}
 
\newtheorem{lemma}{Lemma}[section]

\newtheorem{theorem}[lemma]{Theorem}
\newtheorem{corollary}[lemma]{Corollary}

\theoremstyle{definition}

\newtheorem{definition}[lemma]{Definition}
\newtheorem{propos}[lemma]{Proposition}

{

}

\theoremstyle{remark}

\newtheorem{remark}[lemma]{Remark}



\newcommand{\Ext}{\mathop{\mathrm{Ext}}\nolimits}

\def\P{\mathbb P}
\def\A{\mathbb A}
\def\O{\mathcal{O}}




\newcommand{\modl}{\mathop{\mathrm{mod}}\nolimits}

\newcommand{\Hom}{{\mathrm{Hom}}}

\newcommand{\Coh}{{\mathrm{Coh}}}

\newcommand{\Mod}{{\mathrm{Mod}}}

\newcommand{\coker}{{\mathrm{coker}}}

\newcommand{\Rex}{{\mathrm{Rex}}}
\newcommand{\Lex}{{\mathrm{Lex}}}

\newcommand{\Fun}{{\mathrm{Fun}}}

\newcommand{\Ab}{{\mathrm{Ab}}}
\newcommand{\Eff}{{\mathrm{Eff}}}

\newcommand{\MCM}{{\mathrm{MCM}}}

\newcommand{\Bun}{{\mathrm{Bun}}}

\newcommand{\Modl}{{\mathrm{Mod}}}

\newcommand{\lex}{{\mathrm{lex}}}
\newcommand{\defl}{{\mathrm{def}}}

\newcommand{\xrightarrowdbl}[2][]{%
  \xrightarrow[#1]{#2}\mathrel{\mkern-14mu}\rightarrow
}



\def\kk{\mathrm{k}}

\def\Ee{\EuScript E}

\def\Aa{\EuScript A}


\title{A note on abelian envelopes}
\author{Anya Nordskova}
\address[Anya Nordskova]{Vakgroep Wiskunde, Universiteit Hasselt, Agoralaan gebouw D, 3590 Diepenbeek, Belgium}
\email{anya.nordskova@gmail.com}
\begin{document}
\begin{abstract}
This is a short note bridging the gap between two notions of universal abelian categories associated to exact categories, namely, Rump's quotient categories and Bodzenta-Bondal's abelian envelopes. The established connection allows us to draw several easy conclusions, in particular, we answer some (basic) questions raised by Bodzenta and Bondal in \cite{BB}. 
\end{abstract}
\maketitle

\section{Introduction}

In \cite{R20} Rump defines the \emph{left (resp. right) quotient category} $Q_l(\Ee)$ (resp. $Q_r(\Ee)$) for any exact category $\Ee$. More generally, he considers left (right) exact categories, which are defined by imposing only half of the axioms of the usual definition, see Section \ref{sec:prelims} for details. The category $Q_l(\Ee)$ comes with an exact full embedding $i_l: \Ee \to Q_l(\Ee)$ and is \emph{left abelian} (a natural ``one-sided'' generalisation of the notion of abelian categories). It was shown in \cite{R20} that $Q_l(\Ee)$ together with $i_l$ is universal among left abelian categories $\Aa$ and right exact functors $\Ee \to \Aa$, see Theorem \ref{thm:rump_univ} for the precise statement. In \cite{BB} Bodzenta and Bondal defined \emph{right (resp. left) abelian envelopes} $\Aa_r(\Ee)$ (resp. $\Aa_l(\Ee)$) of exact categories. By definition, a right abelian envelope $\Aa_r(\Ee)$ of an exact category $\Ee$ is an abelian category together with a right exact functor $\Ee \to \Aa_r(\Ee)$, satisfying a universal property for all abelian categories $\Aa$ and right exact functors $\Ee \to \Aa$ (Definition \ref{def:env}). Unlike Rump's quotient categories, abelian envelopes do not always exist. 

Even though the definitions look quite similar, the precise relation between the two notions is not immediately clear. For any exact category $\Ee$, it is evident (see \cite{BB}) from the two universal properties that $\Aa_r(\Ee)$ exists and coincides with $Q_l(\Ee)$ if the latter is abelian. The main conclusion of this note is that the converse also holds, i.e. if $\Aa_r(\Ee)$ exists then $Q_l(\Ee)$ is abelian and they coincide. We make this connection via the explicit description of right abelian envelopes obtained by Bodzenta and Bondal. Namely, they show that $\Aa_r(\Ee)$, if exists, is isomorphic to the subcategory of compact objects $\Lex(\Ee)^c$ of left exact contravariant functors from $\Ee$ to abelian groups $\Ab$. Here we observe that $\Lex(\Ee)^c$ is always left abelian and together with the Yoneda embedding $i: \Ee \to \Lex(\Ee)^c$ it satisfies the universal property of $Q_l(\Ee)$ for any exact category $\Ee$. It follows in particular that the universal functor $i_R: \Ee \to \Aa_r(\Ee)$ is always fully faithful and exact (not only right exact), which answers the questions in Remark 4.6 and after Example 4.7 in \cite{BB}. 

It is worth noting that although abelian envelopes of \cite{BB} essentially turn out to be a special case of Rump's quotient categories, this special case has many advantages, e.g. when considering derived categories (some of this is also discussed in \cite{R21}). We end the note with some simple applications and examples. 

\emph{Acknowledgements.}  I would like to express my gratitude to Michel Van den Bergh and Alexey Bondal for many interesting and helpful discussions. 

\subsection{Conventions.} Throughout the note we will assume that all categories are skeletally small.  All general ``one-sided'' (e.g. about right abelian envelopes) statements have natural dual counterparts which we do not mention.

\section{Preliminaries and Rump's quotient categories}\label{sec:prelims}

We recall the definitions of left exact and left abelian categories.

\begin{definition}
    A {\it left exact category} is an additive category with a distinguished class of cokernels ({\it deflations}) satisfying the following properties: 

    \begin{enumerate}
        \item  The class of deflations is closed under compositions and contains identity morphisms.
        \item The pullback of a deflation along any morphism exists and is again a deflation.
        \item If the composition $A \xrightarrow{f} B \xrightarrow{g} C$ is a deflation and $g$ has a kernel, then $g$ is a deflation.
    \end{enumerate}

Kernels of deflations will be referred to as {\it inflations}. A kernel-cokernel pair consisting of an inflation and a deflation is called a {\it conflation}.
\end{definition}

\begin{definition} An additive category $\mathcal{A}$ with cokernels is said to be {\it left abelian} if for every $f: A \to B$, $g: D \to B$ with $cg = 0$, $c := \coker(f)$, there exists a cokernel $d: E \to D$ such that $gd$ factors through $f$.

\begin{figure}[!h]
\centering
    \begin{tikzcd}
A \arrow[r, "f"]                   & B \arrow[r, "c", two heads]                                    & C \arrow[r] & 0 \\
E \arrow[r, "d"] \arrow[u, dashed] & D \arrow[u, "g"] \arrow[r] \arrow[ru, "0" description, dotted] & 0           &  
\end{tikzcd}
\end{figure}
\end{definition}

Dually one can define right exact and right abelian categories. Note that a category is abelian if and only if it is right and left abelian. An exact category in the sense of Quillen \cite{Q} is the same as a left and right exact category. 

\begin{definition} An additive functor $F: \Ee \to \mathcal{D}$ from a left exact category to an additive category is said to be {\it right exact} if $F(g) = \coker F(f)$ for every conflation ${A \xhookrightarrow{f} B \xrightarrowdbl{g} C}$  in $\Ee$. An additive functor $F: \Ee \to \mathcal{D}$ from a right exact category  to an additive category is {\it left exact} if ${F^{op}: \Ee^{op} \to \mathcal{D}^{op}}$ is right exact. 
\end{definition}

Let $\Ee$ be an additive category. By $\modl(\Ee)$ we denote the category of additive contravariant functors ${F: \Ee^{op} \to \Ab}$ admitting a presentation
$$ \Hom_{\Ee}(-,X) \xrightarrow{f} \Hom_{\Ee}(-,Y) \twoheadrightarrow F \to 0$$
for some morphism $f: X \to Y$ in $\Ee$. If $\Ee$ is a left exact category, denote by $\defl(\Ee)$ the full subcategory of $\modl(\Ee)$ of functors of the form $F = \coker(\Hom(-,f))$ with $f$ a deflation in $\Ee$. 

One can show that $\modl(\Ee)$ is a left abelian category and $\defl(\Ee)$ is a {\it thick} subcategory of $\modl(\Ee)$, i.e. a Serre subcategory closed under subobjects and epimorphic images, for any left exact category $\Ee$. The localization theory for left abelian categories developed by Rump \cite{R07}, \cite{R20} allows to define the quotient, which is again a left abelian category:

\begin{definition}[Rump, \cite{R20}]   The {\it left quotient category} $Q_l(\Ee)$ of $\Ee$ is defined by 
$$  Q_l(\Ee) := \modl(\Ee)/ \defl(\Ee)$$
    
\end{definition}

\begin{propos}[Rump, \cite{R20}, Proposition 5] The Yoneda embedding $\Ee \hookrightarrow \modl(\Ee)$ induces an exact full embedding $i: \Ee \hookrightarrow Q_l(\Ee)$ which reflects conflations. 
    
\end{propos}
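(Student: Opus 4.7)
The plan is to verify the three claims in sequence, denoting the Yoneda embedding by $y\colon \Ee \hookrightarrow \modl(\Ee)$ and the quotient functor by $\pi\colon \modl(\Ee) \to Q_l(\Ee)$, so that $i = \pi \circ y$.

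Exactness is essentially immediate from the construction: for a conflation $A \xhookrightarrow{f} B \xrightarrowdbl{g} C$ in $\Ee$, Yoneda yields the exact sequence $0 \to y(A) \to y(B) \xrightarrow{y(g)} y(C) \to F \to 0$ in $\modl(\Ee)$ with $F = \coker(y(g)) \in \defl(\Ee)$ by the very definition of $\defl(\Ee)$. Since $\pi$ annihilates $F$, the image $i(g)$ becomes the cokernel of $i(f)$ in $Q_l(\Ee)$ and $i(f)$ becomes its kernel, giving the desired conflation.

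For full faithfulness, since $y$ is fully faithful, one only needs $\pi$ to preserve Hom-groups between representables. By Rump's localization theory for thick subcategories in left abelian categories, $\Hom_{Q_l(\Ee)}(i(X), i(Y))$ is computed as a filtered colimit of $\Hom_{\modl(\Ee)}(A', y(Y)/B')$ over subobjects $A' \subseteq y(X)$ with $y(X)/A' \in \defl(\Ee)$ and $B' \subseteq y(Y)$ with $B' \in \defl(\Ee)$. The key vanishing is $\Hom_{\modl(\Ee)}(F, y(Y)) = 0$ for every $F \in \defl(\Ee)$: writing $F = \coker(y(h))$ for a deflation $h\colon Z \twoheadrightarrow W$, a morphism $F \to y(Y)$ lifts to $y(W) \to y(Y)$, i.e.\ by Yoneda a morphism $W \to Y$ whose precomposition with $h$ vanishes, and which is therefore itself zero since deflations are epimorphisms. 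This in particular forces $B' = 0$ and makes the colimit collapse to $\Hom_{\modl(\Ee)}(y(X), y(Y)) = \Hom_\Ee(X,Y)$.

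For reflection, suppose $i(f), i(g)$ form a conflation in $Q_l(\Ee)$. Then $i(g)$ is a cokernel of $i(f)$, translating to $\pi(\coker y(g)) = 0$ in $Q_l(\Ee)$, hence $\coker(y(g)) \in \defl(\Ee)$. The remaining step is to deduce from this that $g$ itself is a deflation in $\Ee$, which is the main technical obstacle: $\coker(y(g)) \in \defl(\Ee)$ \emph{a priori} only says $\coker(y(g)) \cong \coker(y(h))$ for some deflation $h$, so one must invoke axiom~(3) of left exact categories (together with the exactness of $i$ already established) to conclude $g$ itself is a deflation. The kernel part then follows from full faithfulness and the left exactness of $y$.
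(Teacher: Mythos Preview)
The paper does not give its own proof of this proposition; it is quoted verbatim from Rump \cite{R20} and used as a black box. So there is nothing in the paper to compare your argument against, and your proposal has to be judged on its own.

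As an outline your approach is the natural one, but two of the three parts are not complete. For full faithfulness you correctly establish $\Hom_{\modl(\Ee)}(F, y(Y)) = 0$ for $F \in \defl(\Ee)$, and this does force $B' = 0$. However, the claim that the colimit then ``collapses to $\Hom_{\modl(\Ee)}(y(X), y(Y))$'' is not justified: you still have a filtered system indexed by subobjects $A' \subseteq y(X)$ with $y(X)/A' \in \defl(\Ee)$, and while your vanishing gives injectivity of the transition maps $\Hom(y(X), y(Y)) \to \Hom(A', y(Y))$, you have said nothing about surjectivity. What is needed is essentially an $\Ext^1(\defl(\Ee), y(Y)) = 0$ statement (so that representables are $\defl(\Ee)$-closed), or a direct argument that every $\psi\colon A' \to y(Y)$ becomes, after passing to a smaller $A''$, the restriction of a map $y(X) \to y(Y)$; neither is supplied. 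For reflection of conflations, you yourself flag the ``main technical obstacle'' and then only gesture at axiom~(3): you have $\coker(y(g)) \cong \coker(y(h))$ for some deflation $h$, but you do not explain how to produce a factorisation of a known deflation through $g$, nor why $g$ has a kernel, which is the hypothesis axiom~(3) requires. Finally, in the exactness step you deduce that $i(g)$ is the cokernel of $i(f)$, but the assertion that $i(f)$ is the kernel of $i(g)$ is not argued; it does not follow formally from $\pi$ being cokernel-preserving.
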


Left quotient categories satisfy the following universal property. 

\begin{theorem}[Rump, \cite{R20}, Corollary 2 of Proposition 6]\label{thm:rump_univ}For any left exact category $\Ee$ and any left abelian category $\mathcal{A}$, the embedding $i: \Ee \hookrightarrow Q_l(\Ee)$ induces an equivalence of categories 
$$ (-) \circ i: \Fun_{\coker \mapsto \coker}(Q_l(\Ee),\mathcal{A}) \xrightarrow{\cong} \Rex(\Ee,\mathcal{A})$$

where by $\Fun_{\coker \mapsto \coker}$ we denote the category of additive functors respecting cokernels and by $\Rex$ the category of right exact functors. 
\end{theorem}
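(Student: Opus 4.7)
The plan is to factor the universal property through the intermediate category $\modl(\Ee)$, for which an analogous universal property (among additive functors) is easier to establish, and then to pass to the quotient $Q_l(\Ee) = \modl(\Ee)/\defl(\Ee)$ using Rump's localization theory for left abelian categories.

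First, I would establish the following intermediate universal property: for any left abelian category $\Aa$, the Yoneda embedding $y: \Ee \hookrightarrow \modl(\Ee)$ induces an equivalence
\[
(-) \circ y: \Fun_{\coker \mapsto \coker}(\modl(\Ee), \Aa) \xrightarrow{\cong} \Fun_{\add}(\Ee, \Aa).
\]
This is the standard ``finitely presented functors are freely generated by representables'' fact, adapted to the left abelian setting. Given an additive $F: \Ee \to \Aa$, I would construct its extension $\widetilde{F}: \modl(\Ee) \to \Aa$ by sending an object $M \in \modl(\Ee)$ with presentation $\Hom(-,X) \xrightarrow{f^*} \Hom(-,Y) \twoheadrightarrow M$ to $\coker(F(f): F(X) \to F(Y))$ in $\Aa$; well-definedness and functoriality follow from the Yoneda lemma. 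Cokernel-preservation is then immediate from the construction, and uniqueness comes from the fact that every object of $\modl(\Ee)$ is a cokernel of a morphism between representables.

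Next, I would identify, under this equivalence, which cokernel-preserving functors $\widetilde{F}: \modl(\Ee) \to \Aa$ correspond to right exact functors $F: \Ee \to \Aa$. By the construction above, $\widetilde{F}$ sends a functor $M = \coker(\Hom(-,f))$ with $f: X \to Y$ a deflation to $\coker(F(f))$. So $\widetilde{F}$ annihilates every object of $\defl(\Ee)$ precisely when $F(f)$ is a cokernel in $\Aa$ for every deflation $f$ in $\Ee$; and because kernels of deflations are inflations and $F$ is additive, this amounts exactly to saying that $F$ takes conflations to right exact sequences, i.e.\ that $F$ is right exact. (That annihilating objects is the same as annihilating the whole thick subcategory is automatic from $\defl(\Ee)$ being thick.)

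Finally, I would invoke the universal property of the quotient of a left abelian category by a thick subcategory (this is the content of Rump's localization theory \cite{R07}, \cite{R20}, and is where the hypothesis that $\Aa$ be left abelian is essential): the projection $\modl(\Ee) \to \modl(\Ee)/\defl(\Ee) = Q_l(\Ee)$ induces an equivalence between cokernel-preserving functors $Q_l(\Ee) \to \Aa$ and cokernel-preserving functors $\modl(\Ee) \to \Aa$ which vanish on $\defl(\Ee)$. Composing this with the intermediate equivalence above and noting that the composite equivalence is precisely $(-) \circ i$ gives the claim. The main obstacle is the first step: verifying that the Yoneda extension is well-defined and cokernel-preserving when the target is merely left abelian (rather than abelian), since one must check that the cokernel defining $\widetilde F(M)$ is independent of the chosen presentation and behaves functorially — here the one-sided axioms of left abelianness are exactly what is needed to carry the usual argument through.
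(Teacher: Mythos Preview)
The paper does not prove this theorem itself; it cites Rump and indicates that the argument runs via the notion of \emph{dense extension}: one shows that $i:\Ee\hookrightarrow Q_l(\Ee)$ is a dense extension, and then extends a right exact $F:\Ee\to\Aa$ to $Q_l(\Ee)$ by choosing left exact presentations and checking well-definedness directly from the two lifting conditions. Your route through $\modl(\Ee)$ and the quotient is a natural alternative, but as written it contains a genuine gap.

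The problem is in your step~2. For a deflation $g:B\twoheadrightarrow C$ with kernel $f:A\hookrightarrow B$ you have $\widetilde F(\coker\Hom(-,g))=\coker F(g)$, so vanishing of $\widetilde F$ on $\defl(\Ee)$ says exactly that $F(g)$ is an \emph{epimorphism} (zero cokernel), not that $F(g)=\coker F(f)$. These are different conditions. Concretely, take $\Ee=\Aa$ to be finitely generated abelian groups and $F(A)=A/\mathrm{tors}$. Every epimorphism stays epi after killing torsion, so $\widetilde F$ annihilates $\defl(\Ee)$; yet applying $F$ to $\Z\xrightarrow{2}\Z\twoheadrightarrow\Z/2$ gives $\Z\xrightarrow{2}\Z\to 0$, and $0\neq\coker(2)$, so $F$ is not right exact. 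This same example shows your step~3 is also too optimistic: $\widetilde F$ vanishes on $\defl(\Ee)$ but does \emph{not} factor through $Q_l(\Ee)\simeq\Ee$ (the two presentations $\Z\xrightarrow{2}\Z$ and $0\to\Z/2$ of $\Z/2$ are sent by $\widetilde F$ to $\Z/2$ and $0$ respectively). So ``cokernel-preserving and vanishing on the thick subcategory'' is strictly weaker than ``factors through the quotient'' in the left abelian world, precisely because cokernel-preserving functors need not preserve the monomorphism $N:=\coker\Hom(-,f)\hookrightarrow\Hom(-,C)$.

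Your outline can be repaired, but it requires identifying the correct class of morphisms inverted by $\modl(\Ee)\to Q_l(\Ee)$ (e.g.\ monos with cokernel in $\defl(\Ee)$, together with the epis already handled) and showing that a cokernel-preserving $\widetilde F$ inverts these if and only if $F$ is right exact; this is essentially what the dense-extension conditions encode, so one ends up close to Rump's argument anyway.
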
 

\begin{remark} Note that $\Fun_{\coker \mapsto \coker}(\mathcal{B},\mathcal{C}) = \Rex(\mathcal{B},\mathcal{C})$ if $\mathcal{B}$ and $\mathcal{C}$ are abelian. 
\end{remark}

The key to proving this universal property is the fact that the left abelian category $Q_l(\Ee)$ together with the full embedding $i$ is a {\it dense extension} of $\Ee$. 

\begin{definition}[Rump, \cite{R20}, Definition 3]  Let $\Ee$ be a left exact category. A right exact full embedding of $\Ee$ into an additive category $i: \Ee \to \mathcal{A}$ is a {\it dense extension } if every object $A \in \mathcal{A}$ has a presentation $E_0 \xrightarrow{e} E_1 \xrightarrow{p} A \to 0$ with $E_0, E_1 \in \Ee$, $p = \coker(e)$, such that it satisfies the following two proprieties: 

\begin{enumerate}
    \item For every morphism $f: E \to A$ with $E \in \Ee$ there is a deflation $d: E' \twoheadrightarrow E$ in $\Ee$ such that $fd$ factors through $p$.
    \item For every morphism $f: E \to E_1$ such that $pf = 0$ there exists a deflation $d: E' \twoheadrightarrow E$ in $\Ee$ such that $fd$ factors through $e$.
\end{enumerate}

A presentation satisfying these conditions will be referred to as a {\it left exact presentation}. 
\end{definition}

Rump establishes the following fact which we will use later.

\begin{theorem}[Rump, \cite{R20}, Corollary 5 of Proposition 6]\label{thm:unique_extension} Let $\Ee$ be a left exact category. Then the left quotient category $\Ee \hookrightarrow Q_l(\Ee)$ is a dense extension of $\Ee$. Moreover, it is unique among dense extensions with cokernels in the following sense. If $i: \Ee \hookrightarrow \mathcal{A}$ is a dense extension of $\Ee$ with cokernels, then there is a unique equivalence $\varphi: \mathcal{A} \xrightarrow{\simeq} Q_l(\Ee)$ making the diagram commute: 
\begin{figure}[h!]
    \centering
\begin{tikzcd}
                                                                    & Q_l(\mathcal{E})                   \\
\mathcal{E} \arrow[ru, "Y_\mathcal{E}", hook] \arrow[r, "i"', hook] & \mathcal{A} \arrow[u, "\varphi "']
\end{tikzcd}
\end{figure}
\end{theorem}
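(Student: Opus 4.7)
The plan is to split the theorem into two pieces: first, that the Yoneda embedding $\Ee \hookrightarrow Q_l(\Ee)$ is itself a dense extension; second, that any other dense extension of $\Ee$ admitting cokernels is canonically equivalent to $Q_l(\Ee)$ over $\Ee$.

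For the first piece, every object $A \in \modl(\Ee)$ comes by definition with a presentation $\Hom_\Ee(-, E_0) \to \Hom_\Ee(-, E_1) \twoheadrightarrow A \to 0$; applying the localisation functor $\pi : \modl(\Ee) \to Q_l(\Ee)$ and identifying $\pi \circ y$ with $i_l$ produces a presentation $i_l(E_0) \to i_l(E_1) \twoheadrightarrow A \to 0$ in $Q_l(\Ee)$ of the required shape. Conditions (1) and (2) then follow from the calculus of fractions in the Serre quotient: any morphism $i_l(E) \to A$ in $Q_l(\Ee)$ is represented by a roof $i_l(E) \xleftarrow{s} X \to A$ in $\modl(\Ee)$ whose left leg $s$ has kernel and cokernel in $\defl(\Ee)$. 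Since objects of $\defl(\Ee)$ are by construction cokernels of $\Hom_\Ee(-, d)$ for deflations $d$ in $\Ee$, one can replace $s$ by an actual deflation $d: E' \twoheadrightarrow E$, producing the factorisation required by (1). Condition (2) is handled similarly: the vanishing of $pf$ in $Q_l(\Ee)$ means that the composite lands in $\defl(\Ee)$, and unwinding the definition extracts a witnessing deflation.

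For the second piece, let $j : \Ee \hookrightarrow \Aa$ be another dense extension with cokernels. I would define $\varphi : \Aa \to Q_l(\Ee)$ on objects by $\varphi(A) := \coker(i_l(f))$ for a chosen left exact presentation $E_0 \xrightarrow{f} E_1 \twoheadrightarrow A \to 0$ in $\Aa$, and extend to a morphism $g: A \to A'$ by lifting $g \circ p_1 : E_1 \to A'$ through the presentation of $A'$ using condition (1), then descending to $Q_l(\Ee)$. A quasi-inverse $\psi : Q_l(\Ee) \to \Aa$ is built symmetrically, using the first piece. Uniqueness of $\varphi$ is forced by the commutativity condition $\varphi \circ j \cong Y_\Ee$ together with the fact that every object of $\Aa$ is a cokernel of a morphism in $j(\Ee)$, hence its image under $\varphi$ is determined up to canonical isomorphism.

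The main obstacle is verifying that $\varphi$ is well-defined on morphisms and is fully faithful. Independence of the chosen presentation and the chosen lifting deflation reduces to showing that any two such choices differ by a further deflation in $\Ee$, which becomes an isomorphism after descending to $Q_l(\Ee)$; this uses condition (2) of the dense extension to compare distinct lifts. Faithfulness of $\varphi$ amounts to: if $\varphi(g) = 0$, then after precomposing with some deflation the lifted morphism factors through the kernel of $p_1'$, whence condition (1) applied inside $\Aa$ together with $p_1$ being a cokernel forces $g = 0$. Fullness is the dual argument, lifting a morphism between the $\varphi$-images back through the presentations. Once fully faithfulness is in hand, essential surjectivity is automatic, since every object of $Q_l(\Ee)$ has an $\Ee$-presentation that determines a preimage in $\Aa$ as a cokernel.
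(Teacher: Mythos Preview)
The paper does not prove this theorem; it is quoted as Corollary~5 of Proposition~6 in Rump's paper \cite{R20} and used as a black box in the proof of Theorem~\ref{thm:connection}. So there is no ``paper's own proof'' to compare against, and your proposal should be read as an attempt to reconstruct Rump's argument.

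Your overall strategy is the natural one and matches what one would expect: first verify that $\Ee\hookrightarrow Q_l(\Ee)$ is itself a dense extension, then show that any two dense extensions with cokernels are equivalent by transporting presentations. The second piece is essentially forced and your outline of it (define $\varphi$ via cokernels of chosen presentations, lift morphisms using axiom~(1), compare lifts using axiom~(2), build $\psi$ symmetrically) is correct in shape. The well-definedness checks you flag are indeed the content, and your indications for faithfulness and fullness are on the right track.

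There is, however, a genuine soft spot in your first piece. You invoke ``the calculus of fractions in the Serre quotient'' to produce roofs, but $\modl(\Ee)$ is only \emph{left} abelian, and $\defl(\Ee)$ is a thick subcategory in Rump's one-sided sense; the usual Gabriel--Serre calculus does not apply verbatim, and one has to use Rump's localisation theory for left abelian categories instead. More concretely, your sentence ``one can replace $s$ by an actual deflation $d:E'\twoheadrightarrow E$'' hides real work: the left leg $s:X\to \Hom_\Ee(-,E)$ of your roof has $X\in\modl(\Ee)$ not a priori representable, and you need to explain how the hypothesis that $\coker(s)\in\defl(\Ee)$ lets you cover $X$ by a representable so that the composite to $\Hom_\Ee(-,E)$ becomes $\Hom_\Ee(-,d)$ for a deflation $d$ in $\Ee$. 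This is exactly where the definition of $\defl(\Ee)$ (cokernels of $\Hom_\Ee(-,d)$ for $d$ a deflation) must be unpacked, together with closure of deflations under composition and pullback. None of this is deep, but it is the crux of why $Q_l(\Ee)$ is a dense extension, and your current wording assumes it rather than argues it.
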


\section{Connection to abelian envelopes of Bodzenta and Bondal}

Now we recall the definition of right abelian envelopes of exact categories.

\begin{definition}[Bodzenta-Bondal, \cite{BB}]\label{def:env} Let $\Ee$ be an exact category. A {\it right abelian envelope} of $\Ee$ is an abelian category $\Aa_r(\Ee)$ together with a right exact functor $i_R: \Ee \to \Aa_r(\Ee)$ such that for any abelian category $\mathcal{B}$ there is an equivalence of categories: 
$$(-)\circ i_R: \Rex(\Aa_r(\Ee),\mathcal{B}) \xrightarrow{\cong} \Rex(\Ee, \mathcal{B})$$
    
\end{definition}

Left abelian envelopes are defined dually. Unlike the quotient categories of Rump, right/left abelian envelopes do not always exist. 

\begin{definition} An object $X$ of a category $\mathcal{C}$ is \emph{compact} (also \emph{finitely presented} in the literature) if $Hom_\mathcal{C}(X,-)$ commutes with all filtered colimits. 
    
\end{definition}

The following facts were established in \cite{BB}: 

\begin{propos}[\cite{BB}, Lemma 4.5 and the discussion after the definition of envelopes] 
Let $\Ee$ be an exact category. 
\begin{enumerate}
    \item If the left quotient category $Q_l(\Ee)$ is abelian, then the right abelian envelope $\Aa_r(\Ee)$ exists and coincides with $Q_l(\Ee)$. 
    \item  Let $\Lex(\Ee) := \Lex(\Ee^{op}, \Ab)$ be the category of contravariant left exact functors from $\Ee$ to the category $\Ab$ of abelian groups. Denote by $\Lex(\Ee)^c$ the subcategory of compact objects in $\Lex(\Ee)$. If $\Aa_r(\Ee)$ exists, then it is equivalent to $\Lex(\Ee)^c$. Moreover, the functor $i_R$ is faithful. 
\end{enumerate}
\end{propos}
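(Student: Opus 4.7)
The plan is to prove the two claims independently. Part (1) is a formal consequence of Rump's universal property together with the remark that for abelian source and target the conditions ``right exact'' and ``preserves cokernels'' coincide. Part (2), in contrast, requires identifying the abelian envelope with the category of compact left exact functors; for this I would match their universal properties.

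For part (1), suppose $Q_l(\Ee)$ is abelian. For any abelian category $\mathcal{B}$, apply Theorem~\ref{thm:rump_univ} with $\Aa = \mathcal{B}$ to obtain the equivalence
\[
(-) \circ i \colon \Fun_{\coker \mapsto \coker}(Q_l(\Ee), \mathcal{B}) \xrightarrow{\cong} \Rex(\Ee, \mathcal{B}).
\]
By the remark following that theorem, the left hand side coincides with $\Rex(Q_l(\Ee), \mathcal{B})$ because both source and target are abelian. Hence $\Rex(Q_l(\Ee), \mathcal{B}) \cong \Rex(\Ee, \mathcal{B})$ naturally in $\mathcal{B}$, which is exactly the defining property in Definition~\ref{def:env}, so $(Q_l(\Ee), i)$ serves as $(\Aa_r(\Ee), i_R)$.

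For part (2), assume $\Aa_r(\Ee)$ exists. The background facts I would invoke are standard: $\Lex(\Ee)$ is a Grothendieck abelian category, the Yoneda embedding $y \colon \Ee \to \Lex(\Ee)$ factors through the compact subcategory $\Lex(\Ee)^c$, every compact object of $\Lex(\Ee)$ admits a finite presentation by representables, and $\Lex(\Ee)^c$ is itself an abelian subcategory. With these in place, the task reduces to showing that $(\Lex(\Ee)^c, y)$ satisfies the universal property of $\Aa_r(\Ee)$. Given an abelian $\mathcal{B}$ and a right exact $F \colon \Ee \to \mathcal{B}$, I would define the extension $\tilde F \colon \Lex(\Ee)^c \to \mathcal{B}$ by sending $G = \coker(\Hom(-, f))$ with $f \colon X \to Y$ in $\Ee$ to $\coker(F(f))$. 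The main obstacle is the usual one for a construction by presentations: showing that $\tilde F$ is well-defined on objects and morphisms, independently of the chosen presentations, and that it is right exact. This boils down to lifting morphisms of presentations up to $\Ee$ and then invoking the right exactness of $F$ together with the existence of cokernels in $\mathcal{B}$. Uniqueness of the right abelian envelope then forces $\Aa_r(\Ee) \simeq \Lex(\Ee)^c$, and the asserted faithfulness (in fact, full faithfulness) of $i_R$ is immediate, since under this identification $i_R$ becomes the ordinary Yoneda embedding.
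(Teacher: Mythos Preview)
Your argument for part (1) is fine and matches the sketch the paper gives (this proposition is cited from \cite{BB}, so there is no detailed proof in the paper to compare against).

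For part (2) there is a genuine gap. You list among your ``standard background facts'' that $\Lex(\Ee)^c$ is an abelian subcategory. This is \emph{not} true in general: Breitsprecher's theorem quoted in the paper says only that $\Lex(\Ee)^c$ is left abelian, and Corollary~\ref{cor:env_exists} --- one of the main points of this note --- is precisely that $\Lex(\Ee)^c$ is abelian if and only if $\Aa_r(\Ee)$ exists. So you cannot invoke abelianness of $\Lex(\Ee)^c$ as an input; without it your uniqueness argument does not apply, since the universal property defining $\Aa_r(\Ee)$ is formulated for abelian test objects, and uniqueness of the envelope holds only among abelian categories satisfying that property. The route taken in \cite{BB} is different: one passes to the Ind-completion $\mathrm{Ind}(\Aa_r(\Ee))$ and compares it with $\Lex(\Ee)$ via the universal property, thereby obtaining $\Aa_r(\Ee) \simeq \Lex(\Ee)^c$ without first knowing the right-hand side is abelian.

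You also overclaim in the last sentence. You assert that under the identification $\Aa_r(\Ee) \simeq \Lex(\Ee)^c$ the functor $i_R$ becomes the Yoneda embedding, hence is fully faithful. But the proposition only asserts faithfulness, and the paper explicitly notes that \cite{BB} did not know whether $i_R$ is full or whether it agrees with $Y_\Ee$. Establishing that identification is exactly the content of Theorem~\ref{thm:connection} of the present paper, proved via Rump's characterisation of $Q_l(\Ee)$ as the unique dense extension with cokernels. Your sketch, if the gap above were repaired by showing $(\Lex(\Ee)^c, Y_\Ee)$ satisfies Rump's universal property among left abelian categories, would essentially reproduce that argument --- but then you are proving Theorem~\ref{thm:connection}, not the cited proposition.
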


It is remarked, however, that the authors do not know whether the functor $i_R$ in 2) composed with the natural embedding of compact objects $ \Lex(\Ee)^c \hookrightarrow \Lex(\Ee)$ is the Yoneda embedding $Y_\Ee: \Ee \hookrightarrow\Lex(\Ee)$. Indeed, this follows from their proof only provided that $i_R$ is full, but this is also stated as unknown. In addition, it is natural to ask if $\Aa_r(\Ee)$ always exists when the category $\Lex(\Ee)^c$ is abelian. We answer these questions by establishing a more precise connection between Bodzenta-Bondal's abelian envelopes and Rump's quotient categories: 

\begin{theorem}\label{thm:connection} Let $\Ee$ be an exact category. Then the category $\Lex(\Ee)^c$ together with the Yoneda embedding $Y_\Ee$ is equivalent to the left quotient category $Q_l(\Ee)$. 
    
\end{theorem}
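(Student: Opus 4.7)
The plan is to invoke Rump's uniqueness of dense extensions with cokernels (Theorem \ref{thm:unique_extension}): it suffices to show that the Yoneda embedding $Y_\Ee: \Ee \to \Lex(\Ee)^c$ is a dense extension of $\Ee$ and that $\Lex(\Ee)^c$ admits cokernels.

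First I would record the basic facts about the target. The category $\Lex(\Ee)$ is a Grothendieck abelian category, a standard result for exact categories which realizes $\Lex(\Ee)$ as the category of sheaves on $\Ee$ for the topology generated by single deflations; in particular it has cokernels, and the finitely presented (i.e.\ compact) objects $\Lex(\Ee)^c$ are closed under cokernels inside $\Lex(\Ee)$. Each representable $\Hom_\Ee(-,X)$ is compact in $\Modl(\Ee)$, and since $\Lex(\Ee) \hookrightarrow \Modl(\Ee)$ is closed under filtered colimits the representables also lie in $\Lex(\Ee)^c$; hence $Y_\Ee$ factors through $\Lex(\Ee)^c$ and the latter has cokernels. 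The functor $Y_\Ee$ is fully faithful by Yoneda, and right exact because the embedding $\Ee \to \Lex(\Ee)$ sends every conflation to a short exact sequence of representables, a standard property of the left exact completion of an exact category.

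The core of the argument is to produce a left exact presentation of every $F \in \Lex(\Ee)^c$. Since the representables generate $\Lex(\Ee)$ and are compact, the finite presentability of $F$ yields a presentation
\[
\Hom_\Ee(-,X) \xrightarrow{a} \Hom_\Ee(-,Y) \xrightarrow{p} F \to 0
\]
in $\Lex(\Ee)$ with $X,Y \in \Ee$. The two dense extension properties then reduce to the sheaf-theoretic characterisation of epimorphisms in $\Lex(\Ee)$: an epimorphism $\phi : G \twoheadrightarrow H$ has the local lifting property that every $f: \Hom_\Ee(-,E) \to H$ with $E \in \Ee$ admits a deflation $d: E' \twoheadrightarrow E$ such that $f \circ Y_\Ee(d)$ factors through $\phi$. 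Applied to $\phi = p$ this gives property (1); applied to the epimorphism $a: \Hom_\Ee(-,X) \twoheadrightarrow \ker(p)$, after first using $pf=0$ to factor $f$ through $\ker(p) \hookrightarrow \Hom_\Ee(-,Y)$, it gives property (2).

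The main obstacle is articulating the local lifting statement cleanly. It rests on identifying $\Lex(\Ee)$ with sheaves for the single-deflation topology on $\Ee$, or equivalently on an explicit description of the left-exact reflection $\Modl(\Ee) \to \Lex(\Ee)$; the dense extension axioms then follow from unwinding the sheafification formula on representables. Once this is in place, Theorem \ref{thm:unique_extension} produces the unique equivalence $\Lex(\Ee)^c \simeq Q_l(\Ee)$ compatible with the Yoneda embeddings, completing the proof.
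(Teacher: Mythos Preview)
Your proposal is correct and follows the same overall strategy as the paper: reduce to Rump's uniqueness theorem for dense extensions with cokernels (Theorem \ref{thm:unique_extension}) by showing that $Y_\Ee : \Ee \to \Lex(\Ee)^c$ is a dense extension and that $\Lex(\Ee)^c$ has cokernels.

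The only difference lies in how the two dense extension conditions are verified. You appeal to the sheaf-theoretic description of $\Lex(\Ee)$ and the resulting local lifting property of epimorphisms (every section of the target lifts after passing to a cover, i.e.\ a deflation). The paper instead argues inside the Grothendieck category $\Lex(\Ee)$: for condition (1) it pulls back $p$ along $f$ to obtain an epimorphism onto $E$, then invokes Proposition \ref{thm:lowen_kaledin} (Lowen--Kaledin) to produce $E' \in \Ee$ mapping to the pullback with epimorphic composite $E' \twoheadrightarrow E$, which is then a deflation; condition (2) is handled the same way. These two arguments are essentially reformulations of one another---Proposition \ref{thm:lowen_kaledin} is precisely the local lifting property specialised to the case where the target is representable, combined with a pullback to reduce to that case. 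Your phrasing is slightly more conceptual and handles both conditions uniformly; the paper's is more self-contained in that it quotes a specific proposition rather than the full sheaf identification.
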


Note that it is claimed in particular that the Yoneda embedding $Y_\Ee: \Ee \to \Lex(\Ee)^c$ maps $\Ee$ to compact objects in $\Lex(\Ee)$ (this will be explained later). We conclude:  

\begin{corollary}\label{cor:env_exists} The following are equivalent for any exact category $\Ee$.  
\begin{enumerate}
    \item $\Aa_r(\Ee)$ exists.
    \item $Q_l(\Ee)$ is abelian. 
    \item $\Lex(\Ee)^c$ is abelian. 
\end{enumerate}
In this case the universal functor $i_R: \Ee \to \Aa_r(\Ee)$ is the Yoneda embedding $Y_\Ee: \Ee \to \Lex(\Ee)^c$. In particular, $i_R$ is exact and fully faithful. 
    
\end{corollary}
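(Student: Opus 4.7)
The plan is to deduce the three equivalences essentially as a formal consequence of Theorem \ref{thm:connection} combined with the already-cited results from \cite{BB}. I would organize the proof in a small cycle.

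First, I would observe that (2) $\Leftrightarrow$ (3) is immediate: Theorem \ref{thm:connection} gives an equivalence $\Lex(\Ee)^c \simeq Q_l(\Ee)$ of categories, and being abelian is preserved under equivalence. Next, the implication (2) $\Rightarrow$ (1) is exactly the content of part (1) of the \cite{BB} proposition recalled just before Theorem \ref{thm:connection}: when $Q_l(\Ee)$ happens to be abelian, its universal property for right exact functors to left abelian categories (Theorem \ref{thm:rump_univ}) specializes to the universal property defining $\Aa_r(\Ee)$, so $\Aa_r(\Ee)$ exists and equals $Q_l(\Ee)$. For (1) $\Rightarrow$ (3), I would invoke part (2) of the same \cite{BB} proposition: if $\Aa_r(\Ee)$ exists then $\Aa_r(\Ee) \simeq \Lex(\Ee)^c$, and since $\Aa_r(\Ee)$ is abelian by definition, so is $\Lex(\Ee)^c$.

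For the final assertions (the identification of $i_R$ and its properties), once the three conditions are known to hold simultaneously I would chain the equivalences $\Aa_r(\Ee) \simeq Q_l(\Ee) \simeq \Lex(\Ee)^c$. By Theorem \ref{thm:connection} the equivalence $Q_l(\Ee) \simeq \Lex(\Ee)^c$ is realized by the Yoneda embedding $Y_\Ee$, and on the Bodzenta--Bondal side the equivalence $\Aa_r(\Ee) \simeq \Lex(\Ee)^c$ intertwines $i_R$ with the canonical embedding into $\Lex(\Ee)^c$; under the identifications these all agree with $Y_\Ee$. Finally, the Proposition from \cite{R20} recalled before Theorem \ref{thm:rump_univ} asserts that the Yoneda embedding $\Ee \hookrightarrow Q_l(\Ee)$ is an exact full embedding, which under the equivalence translates to $i_R$ being exact and fully faithful.

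I do not foresee any serious obstacle here: all content has been packed into Theorem \ref{thm:connection} and the \cite{BB} proposition, so the corollary reduces to bookkeeping. The only mild subtlety is verifying the compatibility of the various universal functors in the last paragraph, i.e.\ that the equivalence $\Aa_r(\Ee) \simeq \Lex(\Ee)^c$ of \cite{BB} sends $i_R$ to the compact-Yoneda embedding; this is essentially unique by the universal property, so I would state it briefly and refer to the naturality of both universal characterizations rather than computing explicitly.
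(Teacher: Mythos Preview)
Your cycle (2) $\Leftrightarrow$ (3), (2) $\Rightarrow$ (1), (1) $\Rightarrow$ (3) is correct and is exactly how the paper intends the corollary to be read off from Theorem \ref{thm:connection} together with the cited \cite{BB} proposition.

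The one point to tighten is the identification of $i_R$ with $Y_\Ee$. You write that the Bodzenta--Bondal equivalence $\Aa_r(\Ee) \simeq \Lex(\Ee)^c$ ``intertwines $i_R$ with the canonical embedding into $\Lex(\Ee)^c$'', but this compatibility is precisely what \cite{BB} does \emph{not} establish --- the paper explicitly flags this as open just before stating Theorem \ref{thm:connection}, and it is part of what the corollary is meant to resolve. So invoking it risks circularity. Your fallback to the universal property is the correct idea, but it should be the primary route rather than a patch: once (2) holds, the pair $(Q_l(\Ee), i)$ itself satisfies the defining universal property of a right abelian envelope (Theorem \ref{thm:rump_univ} specialized to abelian targets); uniqueness of envelopes then forces $i_R$ to agree with $i$ under $\Aa_r(\Ee) \simeq Q_l(\Ee)$; and Theorem \ref{thm:connection} transports $i$ to $Y_\Ee$. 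This bypasses the \cite{BB} equivalence entirely for the identification, and exactness and full faithfulness of $i_R$ then follow from Rump's Proposition (Proposition 5 in \cite{R20}) as you say.
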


\begin{remark} \begin{enumerate}
    \item Theorem \ref{thm:connection} claims that $\modl(\Ee)/\defl(\Ee) \cong \Lex(\Ee)^c$ for any exact category $\Ee$. One can also show this directly. Let us sketch the argument in the case when $\modl(\Ee)$ is abelian (equivalently, when $\Ee$ has \emph{weak kernels}, see the discussion after Definition \ref{def:extker}). Recall that $\Lex(\Ee)$ is the quotient of the category of all additive contravariant functors $\Modl(\Ee)$ from $\Ee$ to $\Ab$ by the Serre subcategory of {\it weakly effaceable} functors $\Eff(\Ee)$, see e.g. \cite{Kel3}. By the discussion after Theorem \ref{thm:compactfp}, any compact object in both $\Modl(\Ee)$ and $\Lex(\Ee)$ is the cokernel of a morphism in $\Ee$. Hence the localisation functor, which is an equivalence on $\Ee$, induces an exact essentially surjective functor ${Q: \modl(\Ee) \cong (\Modl{\Ee})^c \to \Lex(\Ee)^c}$. It is known that $\Eff(\Ee) \cap \modl(\Ee) = \defl(\Ee)$ (e.g. \cite{HKvR}, Proposition 3.20 and \cite{S}, Lemma 9). Thus the kernel of $Q$ is $\defl(\Ee)$. 
    \item Let $\lex(\Ee)$ be the full subcategory of $\modl(\Ee)$ consisting of left exact functors. We warn the reader that $\lex(\Ee)$ is not necessarily equivalent to $\Lex(\Ee)^c$ even when $\Ee$ has weak kernels. To be more precise, in this case $\lex(\Ee)$ is the subcategory of {\it $\defl(\Ee)$-closed objects} in $\modl(\Ee)$, i.e. $\lex(\Ee) = \{ X \in \modl(\Ee) \ | \ \Hom(\defl(\Ee),X) = \Ext^1(\defl(\Ee),X) = 0 \}$ (see \cite{O}, Proposition 2.8). Hence, if we assume additionally that the localisation functor $L: \modl(\Ee) \to \modl(\Ee)/\defl(\Ee) $ has a right adjoint, then $\modl(\Ee)/\defl(\Ee) \cong \lex(\Ee)$. However, this is not always the case ([Ibid., Example 2.10]). Note that if $\Ee$ has enough projectives, then $L$ indeed has a right adjoint ([Ibid., Proposition 2.17]). 
    \item Note an immediately consequence of the fact that the universal functors $i_R: \Ee \to \Aa_r(\Ee)$, $i_L: \Ee \to \Aa_l(\Ee)$ are exact and not only right and left exact respectively. Namely, if $\Ee$ has both abelian envelopes, then the two universal properties give rise to a pair of adjoint functors between $\Aa_r(\Ee)$ and $\Aa_l(\Ee)$ (see also \cite{R21}). 
\end{enumerate}
\end{remark}

We will need some basic facts about Grothendieck categories and compact objects in the proof of Theorem \ref{thm:connection}. 

\begin{definition} A category $\mathcal{A}$ is {\it locally finitely presented} if it has filtered colimits, the full subcategory of compact objects $\mathcal{A}^c$ is skeletally small and every object of $\mathcal{A}$ is a filtered colimit of compact objects.  A {\it Grothendieck category} is an abelian category which has a generator and small colimits and satisfies (AB5), i.e. small filtered colimits of exact sequences are exact.
\end{definition}
\begin{remark} An abelian category is locally finitely presented if and only if it is a Grothendieck category with a generating set of compact objects (see \cite{B}, Satz 1.5, \cite{CB}, 2.4).
    
\end{remark}

\begin{theorem}[Breitsprecher, \cite{B}, Satz 1.11]\label{thm:compactfp} Let $\mathcal{C}$ be a locally finitely presented Grothendieck category and $\mathcal{S} \subset \mathcal{C}$ a class of compact generators. Then $X \in \mathcal{C}$ is compact if and only if there is a presentation $\bigoplus_{i=0}^n S_i \to \bigoplus_{i=0}^m S_i' \to X \to 0$ with $S_i, S_i' \in \mathcal{S}$. 
    
\end{theorem}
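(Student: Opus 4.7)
The plan is to prove the two implications separately. The \emph{if} direction is routine: if $X$ admits a presentation $A \to B \to X \to 0$ with $A, B$ finite direct sums of objects from $\mathcal{S}$, then $A$ and $B$ are compact (finite direct sums of compact objects are compact), and $\Hom_\mathcal{C}(X,-) = \ker(\Hom_\mathcal{C}(B,-) \to \Hom_\mathcal{C}(A,-))$. Since by (AB5) filtered colimits in $\mathcal{C}$ are exact, kernels of natural transformations between filtered-colimit-preserving functors again preserve filtered colimits, so $\Hom_\mathcal{C}(X,-)$ does and $X$ is compact.

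For the converse I would proceed in two structurally similar steps. First, produce a finite direct sum $Y := \bigoplus_{i \in F} S_i'$ together with an epimorphism $p: Y \twoheadrightarrow X$. Since $\mathcal{S}$ generates, there is an epi $\bigoplus_{i \in I} S_i \twoheadrightarrow X$ from some (possibly infinite) sum. Writing this as the filtered colimit of its finite subsums, the images $X_F := \im\bigl(\bigoplus_{i \in F'} S_i \to X\bigr)$ over finite $F' \subseteq I$ form a filtered system of subobjects of $X$ with union $X$. Compactness of $X$ makes $\mathrm{id}_X$ factor as $X \to X_{F} \hookrightarrow X$; the inclusion is then a split epimorphism and a monomorphism, hence an isomorphism, yielding $X = X_F$ for some finite $F$ and the desired epi $p$.

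The second step mimics this strategy for $K := \ker p$, but a single application of compactness no longer suffices. Choose an epi $\bigoplus_{j \in J} S_j \twoheadrightarrow K$ and let $K_\lambda$ be the image of the subsum indexed by a finite $\lambda \subseteq J$, so $K = \mathrm{colim}_\lambda K_\lambda$ and $X = \mathrm{colim}_\lambda Y_\lambda$ for $Y_\lambda := Y/K_\lambda$. Each $Y_\lambda$ is compact by the already-proved easy direction. Compactness of $X$ yields a section $s : X \to Y_\lambda$ of the quotient $\pi_\lambda : Y_\lambda \twoheadrightarrow X$ at some stage $\lambda$, equivalently a splitting $Y_\lambda \cong X \oplus K/K_\lambda$ --- which is not yet the vanishing $K/K_\lambda = 0$ one wants. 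To upgrade, I would apply compactness of $Y_\lambda$ to the endomorphism $\psi := \mathrm{id}_{Y_\lambda} - s\pi_\lambda$, which satisfies $\pi_\lambda \psi = 0$. Using $\Hom(Y_\lambda, X) = \mathrm{colim}_\mu \Hom(Y_\lambda, Y_\mu)$, the vanishing of $\pi_\lambda \psi$ forces $q_{\lambda,\mu} \psi = 0$ at some $\mu \geq \lambda$, where $q_{\lambda,\mu} : Y_\lambda \twoheadrightarrow Y_\mu$ is the transition map. Unwinding this reads $q_{\lambda,\mu} = s_\mu \pi_\lambda$ with $s_\mu := q_{\lambda,\mu} s$; the left-hand side is surjective, so $s_\mu$ is surjective, and being also a split monomorphism it is an isomorphism. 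Consequently $\pi_\mu$ is an isomorphism and $K = K_\mu$, producing the desired presentation $\bigoplus_{j \in \mu} S_j \to Y \to X \to 0$.

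The main obstacle is this last step: one has to use compactness of the approximating objects $Y_\lambda$ (not only of $X$) together with the specific endomorphism $\psi$ to promote the mere splitting $Y_\lambda \cong X \oplus K/K_\lambda$ into the actual equality $K = K_\mu$. Everything else is a routine combination of the (AB5) axiom with compactness of the chosen generators.
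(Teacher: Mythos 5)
Your argument is correct and complete; note that the paper itself gives no proof of this statement, citing it directly from Breitsprecher's Satz 1.11, and what you have written is essentially the standard proof of that classical fact (finitely presented $=$ compact in a locally finitely presented Grothendieck category), with the two-stage use of compactness --- first of $X$ to split $\pi_\lambda$, then of $Y_\lambda$ applied to $\psi=\mathrm{id}-s\pi_\lambda$ to kill $K/K_\mu$ --- being exactly the right way to upgrade the splitting to the equality $K=K_\mu$. The only cosmetic quibble is in the easy direction: the exactness you need is that of filtered colimits in $\Ab$ (so that a kernel of a map of filtered-colimit-preserving functors again preserves filtered colimits), not (AB5) for $\mathcal{C}$.
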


Let $\Ee$ be an exact category. It is well known that $\Lex(\Ee)$ is a Grothendieck category and the Yoneda functor $Y_\Ee: \Ee \to \Lex(\Ee)$, sending $E \in \Ee$ to $\Hom_{\Ee}(-,E)$, is fully faithful and exact. One can show that $\Ee$ is a class of compact generators of $\Lex(\Ee)$. The fact that representable functors are compact follows from the fact that they are compact in $\Mod(\Ee)$ and the subcategory $\Lex(\Ee)$ is closed under filtered colimits (see e.g. \cite{LK}). In particular, the Yoneda embedding $Y_\Ee$ maps $\Ee$ into $\Lex(\Ee)^c$ and every object of $\Lex(\Ee)^c$ is the cokernel of a morphism in $\Ee$ by {Theorem \ref{thm:compactfp}}. 

\begin{propos}[e.g. Lowen-Kaledin, \cite{LK}, Proposition 2.17]\label{thm:lowen_kaledin} Let $\Ee$ be an exact category. 
    Let $p: X \twoheadrightarrow E$ be an epimorphism in $\Lex(\Ee)$ with $E \in \Ee$. Then there exist $E' \in \Ee$ and a morphism $g: E' \to X$ in $\Lex(\Ee)$ such that the composition $fg: E' \twoheadrightarrow E$ is again an epimorphism. 
\end{propos}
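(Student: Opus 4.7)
The plan is to combine two standard facts about $\Lex(\Ee)$: it is a Grothendieck category in which (the images under the Yoneda embedding of) the objects of $\Ee$ form a class of compact generators, and in particular the target $E$ is itself compact in $\Lex(\Ee)$. The strategy is to realise $E$ as a filtered colimit of subobjects coming from maps $E' \to X$ with $E' \in \Ee$, and then apply compactness of $E$. Concretely, since the representables generate, the family of all morphisms $\{h_\alpha: E_\alpha \to X\}_{\alpha \in A}$ with $E_\alpha \in \Ee$ is jointly epimorphic, i.e.\ $\sum_\alpha \Imm(h_\alpha) = X$. Grouping them by finite subsets $F \subset A$ and using that Yoneda preserves finite biproducts, I would form a directed family of morphisms $h_F: E_F \to X$ with $E_F := \bigoplus_{\alpha \in F} E_\alpha \in \Ee$, whose images still jointly cover $X$.

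Composing with the epimorphism $p$ would then produce a directed family of subobjects $Z_F := \Imm(p \circ h_F) \subseteq E$. The key intermediate step is the identity $\sum_F Z_F = E$; this is where the (AB5) axiom enters, since filtered colimits (equivalently, directed unions of subobjects) are exact in a Grothendieck category and hence commute with formation of images, giving $\sum_F Z_F = \Imm(p) = E$. Under (AB5), this directed sum coincides with the filtered colimit $\varinjlim_F Z_F$ of the inclusions, so $E = \varinjlim_F Z_F$ in $\Lex(\Ee)$.

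At this point, compactness of $E$ would yield
$$\id_E \in \Hom(E, \varinjlim_F Z_F) = \varinjlim_F \Hom(E, Z_F),$$
so $\id_E$ factors as $E \xrightarrow{u} Z_{F_0} \hookrightarrow E$ for some $F_0$. The inclusion $Z_{F_0} \hookrightarrow E$ is then split epi but also monic, hence an isomorphism; this means $p \circ h_{F_0}$ is epi, and setting $E' := E_{F_0}$ and $g := h_{F_0}$ would finish the proof. The main (and essentially only) subtle point I anticipate is the intermediate claim $\sum_F Z_F = E$, which rests on (AB5) as explained above; the rest is formal.
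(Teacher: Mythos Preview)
The paper does not supply its own proof of this proposition; it is quoted from the literature (Lowen--Kaledin, Proposition~2.17) and used as a black box in the proof of Theorem~\ref{thm:connection}. So there is no in-paper argument to compare against.

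Your proposed argument is correct and is the standard one. The ingredients you invoke are exactly those recorded just before the proposition: $\Lex(\Ee)$ is Grothendieck and the representables form a set of compact generators, so in particular $E$ is compact. Writing $X$ as the directed union of the images of $h_F\colon E_F\to X$ (with $E_F\in\Ee$ a finite sum) and pushing forward along the epimorphism $p$ gives, by (AB5), $E=\varinjlim_F Z_F$ with $Z_F=\Imm(p\,h_F)$; compactness of $E$ then forces $Z_{F_0}=E$ for some $F_0$, whence $p\,h_{F_0}$ is epi. The step you flagged as ``subtle'' (that $\sum_F Z_F=E$) is indeed just exactness of filtered colimits applied to the epimorphism $\bigoplus_\alpha E_\alpha\twoheadrightarrow X\twoheadrightarrow E$; nothing more is needed. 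One cosmetic point: the statement in the paper has a typo ($fg$ should read $pg$), and you correctly worked with $p\circ g$.
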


The following result implies that $\Lex(\Ee)^c$ is left abelian for any exact category $\Ee$. 

\begin{theorem}[Breitsprecher \cite{B}, see also Crawley-Boevey \cite{CB}, Rump \cite{R10}]
    Let $\mathcal{A}$ be an additive locally finitely presented category. Then $\mathcal{A}$ is abelian if and only if $\mathcal{A}^c$ is left abelian. 
\end{theorem}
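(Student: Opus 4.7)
The plan is to prove both directions separately, with $\mathcal{A}$ abelian $\Rightarrow \mathcal{A}^c$ left abelian being the more involved direction.

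First I would show that $\mathcal{A}^c$ is closed under cokernels computed in $\mathcal{A}$. For $f: X \to Y$ with $X, Y \in \mathcal{A}^c$, set $Z := \coker f$ in $\mathcal{A}$. The left exact sequence $0 \to \Hom(Z, -) \to \Hom(Y, -) \to \Hom(X, -)$ of functors on $\mathcal{A}$ exhibits $\Hom(Z, -)$ as the kernel of a natural transformation between functors preserving filtered colimits; since filtered colimits in $\Ab$ are exact, the kernel preserves them as well, so $Z \in \mathcal{A}^c$ and cokernels in $\mathcal{A}^c$ exist and are computed in $\mathcal{A}$.

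Next I would verify the left abelian axiom. Given $f: A \to B$ and $g: D \to B$ in $\mathcal{A}^c$ with $cg = 0$ for $c := \coker f$, I would form the pullback $P := A \times_B D$ in $\mathcal{A}$. Because $\mathcal{A}$ is abelian and $cg = 0$, the morphism $g$ factors through $\ker c = \im f$, so the projection $v: P \to D$ is a pullback of the epimorphism $A \twoheadrightarrow \im f$ and is itself an epimorphism. Write $P = \varinjlim E_i$ with $E_i \in \mathcal{A}^c$, and set $D_i := \coker(E_i \to D) \in \mathcal{A}^c$; these form a directed system with epi transitions $D_i \twoheadrightarrow D_j$ for $i \le j$, and its colimit is $\coker v = 0$. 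Applying $\Hom(D_i, -)$ to $\varinjlim_j D_j = 0$ and using compactness of $D_i$, the identity on $D_i$ must become zero under some transition, forcing $D_j = 0$ for some $j$. Thus $d := (E_j \to D)$ is epi in $\mathcal{A}$, and the induced map $E_j \to P \to A$ provides the required factor $e$ with $gd = fe$.

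The principal obstacle is verifying that $d$ is genuinely a cokernel in $\mathcal{A}^c$, not merely an epimorphism of $\mathcal{A}$. One must exhibit $d$ as $\coker(h)$ for some $h: X \to E_j$ with $X$ compact, equivalently find a compact $X$ whose image in $E_j$ equals $\ker d$. Since $\ker d$ need not itself be compact in a locally finitely presented Grothendieck category, this requires passing sufficiently far in the directed system $\{E_i\}$ and exploiting compactness of $D$ together with the cokernel-approximation argument above to produce the required compact presentation of $\ker d$; this is the main technical step. The converse direction recovers $\mathcal{A}$ as the Ind-completion of $\mathcal{A}^c$: cokernels in $\mathcal{A}$ descend from those in $\mathcal{A}^c$ via filtered colimits, and the left abelian axiom on $\mathcal{A}^c$ is precisely what is needed to assemble kernels of arbitrary morphisms in $\mathcal{A}$ as filtered colimits of kernel-like data coming from $\mathcal{A}^c$, so abelianness of $\mathcal{A}$ follows from the established existence of cokernels together with AB5-exactness.
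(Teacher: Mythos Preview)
The paper does not prove this theorem: it is stated with attribution to Breitsprecher, Crawley-Boevey, and Rump and used as a black box (the immediately following proof environment is for Theorem~\ref{thm:connection}, not for this statement). So there is no in-paper proof to compare your proposal against.

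That said, a brief assessment of your sketch on its own merits. The forward direction is along standard lines and the argument that $\mathcal{A}^c$ is closed under cokernels is fine. For the left abelian axiom you correctly identify the crux: the morphism $d: E_j \to D$ you produce is an epimorphism in $\mathcal{A}$, but what is required is that $d$ be a \emph{cokernel in $\mathcal{A}^c$}. You flag this as the ``main technical step'' but do not actually carry it out; the hand-wave about ``passing sufficiently far in the directed system'' is not a proof, and in a general locally finitely presented abelian category kernels of maps between compact objects need not be compact, so one genuinely has to manufacture a compact object mapping onto $\ker d$. This is exactly the content that the cited references supply, and without it your forward direction is incomplete. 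The converse direction is only gestured at; recovering kernels in $\mathcal{A}$ from the left abelian structure on $\mathcal{A}^c$ via Ind-completion is the substantive part and deserves an actual argument.
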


\begin{proof}[Proof of Theorem \ref{thm:connection}] Since the category $\Lex(\Ee)^c$ has cokernels, due to Theorem \ref{thm:unique_extension} it is sufficient to show that $Y_\Ee: \Ee \to \Lex(\Ee)^c $ is a dense extension. As discussed above, every $X \in \Lex(\Ee)^c$ has a presentation 
$$ E_0 \xrightarrow{a} E_1 \xrightarrow{p} X \to 0$$ 
with $E_0, E_1 \in \Ee$. One can show that this is always a left exact presentation, i.e. it satisfies the two conditions in the definition of a dense extension. Indeed, let $f: E \to X$ be a morphism with $E \in \Ee$. Consider the pullback of $p$ and $f$ in the ambient abelian category $\Lex(\Ee)$. Since $p$ is an epimorphism, so is its pullback $q: Y \to E$. By Proposition \ref{thm:lowen_kaledin} there exist $E' \in \Ee$ and a morphism $t: E' \to Y$ such that $d := qt$ is again an epimorphism. Observe that $d$ is the desired deflation. The second condition is easily established in the same fashion. 

\begin{figure}[h!]
    \centering
    \begin{tikzcd}
E_0 \arrow[r, "a"]                                        & E_1 \arrow[r, "p", two heads]              & X \arrow[r]       & 0 \\
E' \arrow[r, "t"] \arrow[rr, "d"', two heads, bend right] & Y \arrow[r, "q", two heads] \arrow[u, "g"] & E \arrow[u, "f"'] &  
\end{tikzcd}
\end{figure}

\end{proof}

\begin{remark}  It is also possible to show that $Y_\Ee: \Ee \to \Lex(\Ee)^c$ satisfies the universal property of Theorem \ref{thm:rump_univ} directly. The proof would be essentially the same as Rump's proof for $Q_l(\Ee)$ in \cite{R20}. 

\end{remark}

 Let us now discuss the conditions on $\Ee$ under which it possesses a right/left abelian envelope. 

\begin{definition}[Rump, \cite{R21}, Definition 3]\label{def:extker} Let $\Ee$ be a left exact category. A morphism $g:C \to B$ in $\Ee$ is said to be an {\it Ext-kernel} of $f: B \to A$ if $fg = 0$ and for any $g': C' \to B$ such that $fg' = 0$ there exists a deflation $d: D \to C'$ such $g'd$ factors through $g$.
  
    \begin{figure}[h!]
        \centering
        \begin{tikzcd}
C \arrow[r, "g"]                               & B \arrow[r, "f"]                                       & A \\
E' \arrow[r, "d", two heads] \arrow[u, dotted] & C' \arrow[u, "g'"] \arrow[ru, "0" description, dotted] &  
\end{tikzcd}
       
    \end{figure}

A left exact category $\Ee$ is said to be {\it left Ext-coherent} if every morphism in $\Ee$ has an Ext-kernel.  
\end{definition}

Dually one can define Ext-cokernels and right Ext-coherent categories. Recall that $g:C\to B$ is called a \emph{weak kernel} of $f:B \to A$ if $fg = 0$ and every $g'$ with $fg' = 0$ factors through $g$. An additive category with weak kernels is called {\it left coherent} (in the literature the name \emph{right coherent} is commonly used, but here we consistently follow the conventions of Rump). Unlike Ext-coherence, this property does not depend on an exact structure. Now thanks to the following result we can add another equivalent condition to the list in Corollary \ref{cor:env_exists}: 

\begin{theorem}[Rump, \cite{R21}, Proposition 6]\label{thm:extkers} Let $\Ee$ be an exact category. Then $Q_l(\Ee)$ is abelian (i.e. $\Aa_r(\Ee)$ exists) if and only if $\Ee$ is left Ext-coherent. 
\end{theorem}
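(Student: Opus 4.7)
The plan is to exploit the identification $Q_l(\Ee) \simeq \Lex(\Ee)^c$ from Theorem \ref{thm:connection} and translate the abelianness of $\Lex(\Ee)^c$ into a statement about kernels inside the ambient Grothendieck category $\Lex(\Ee)$. Since $\Lex(\Ee)^c$ is always left abelian and inherits cokernels from $\Lex(\Ee)$, it is abelian precisely when, for every morphism $\phi$ between compact objects, the kernel of $\phi$ computed in $\Lex(\Ee)$ is again compact. Presenting the source and target of an arbitrary $\phi$ by representables (Theorem \ref{thm:compactfp}) and lifting $\phi$ to a morphism of such presentations using Proposition \ref{thm:lowen_kaledin}, a snake-lemma-type diagram chase reduces the problem to the following: for every morphism $f: B \to A$ in $\Ee$, the functor $K_f := \ker(\Hom_{\Ee}(-,f)) \subseteq \Hom_{\Ee}(-,B)$ is compact in $\Lex(\Ee)$.

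By Theorem \ref{thm:compactfp}, $K_f$ is compact iff it admits a presentation $\Hom_{\Ee}(-,D) \to \Hom_{\Ee}(-,C) \to K_f \to 0$ in $\Lex(\Ee)$ with $C,D \in \Ee$, and the key point is that each half of such a presentation has a direct interpretation in terms of Ext-kernels. First, a morphism $\Hom_{\Ee}(-,C) \to K_f$ corresponds, via the Yoneda lemma and the inclusion $K_f \hookrightarrow \Hom_{\Ee}(-,B)$, to some $g : C \to B$ with $fg = 0$. Using Proposition \ref{thm:lowen_kaledin} applied to pullbacks along arbitrary $\Hom_{\Ee}(-,E) \to K_f$, this morphism is epi in $\Lex(\Ee)$ iff for every $h: E \to B$ in $\Ee$ with $fh = 0$ there exist a deflation $d: E' \to E$ and $k: E' \to C$ with $gk = hd$; this is precisely the definition of $g$ being an Ext-kernel of $f$. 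Second, the kernel of such an epimorphism $\Hom_{\Ee}(-,C) \twoheadrightarrow K_f$ coincides with $\ker(\Hom_{\Ee}(-,g)) = K_g$, so the existence of an epimorphism onto it from a representable $\Hom_{\Ee}(-,D)$ is, by the same argument applied to $g$, equivalent to $g$ having an Ext-kernel.

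Putting the two observations together, $K_f$ is compact iff $f$ admits an Ext-kernel $g$ and $g$ in turn admits an Ext-kernel. Hence left Ext-coherence of $\Ee$ forces compactness of every $K_f$ and thus abelianness of $Q_l(\Ee)$; conversely, if $Q_l(\Ee)$ is abelian then every $K_f$ is compact, which already yields an Ext-kernel for every $f \in \Ee$, i.e., $\Ee$ is left Ext-coherent. I expect the main obstacle to be the reduction in the first paragraph: showing that compactness of $K_f$ for morphisms $f \in \Ee$ suffices to guarantee compactness of kernels of arbitrary morphisms in $\Lex(\Ee)^c$. This requires iterated applications of Proposition \ref{thm:lowen_kaledin} to compatibly lift through presentations; once it is granted, the remainder is a direct dictionary between epimorphisms in $\Lex(\Ee)$ (characterized by lifts through deflations in $\Ee$) and the Ext-kernel condition.
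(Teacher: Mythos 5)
Your overall route is reasonable and part of it is genuinely correct: the dictionary you set up between epimorphisms $\Hom_\Ee(-,C)\twoheadrightarrow K_f$ in $\Lex(\Ee)$ and Ext-kernels of $f$ is exactly right (epimorphy means the pointwise cokernel is weakly effaceable, which unravels to the deflation condition in Definition \ref{def:extker}), and likewise the identification of the kernel of such an epimorphism with $K_g$. Note, however, that the paper itself does not prove this statement: it is quoted from Rump \cite{R21}, whose argument works directly with $Q_l(\Ee)=\modl(\Ee)/\defl(\Ee)$; your argument instead goes through the model $\Lex(\Ee)^c$ of Theorem \ref{thm:connection}, which is a legitimate but different route. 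As a proof, though, it has two real gaps.

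First, the step you yourself flag as ``the main obstacle'' is precisely the technical core and is left undone: you must show that compactness of $K_f$ for all morphisms $f$ of $\Ee$ forces compactness of $\ker\phi$ for an arbitrary morphism $\phi$ between compacts. This is not a routine lifting through presentations, because representables are not projective in $\Lex(\Ee)$ and Proposition \ref{thm:lowen_kaledin} only produces epimorphisms, not deflations. A workable version requires extra ingredients you do not mention: after covering, one must replace the epimorphism $Y(E')\to Y(E_0)$ by $Y(d)$ for an honest deflation $d$ (so that its kernel is again representable, using exactness of $Y_\Ee$), reduce $\ker(q\circ Y(c))$, where $q$ has presentation kernel $\im Y(b)$, to the cokernel of $K_b\to K_{(c,-b)}$, and then repeatedly use closure of compacts under cokernels. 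Until this is carried out, the direction ``left Ext-coherent $\Rightarrow$ $Q_l(\Ee)$ abelian'' is incomplete. Second, your opening equivalence ``$\Lex(\Ee)^c$ abelian iff kernels computed in $\Lex(\Ee)$ of morphisms between compacts are compact'' is asserted without justification in the direction you need for the converse: if $\Lex(\Ee)^c$ is abelian, its internal kernels need not \emph{a priori} coincide with kernels in $\Lex(\Ee)$, so compactness of the ambient $K_f$ does not follow for free. One can repair this (e.g.\ by showing that a monomorphism of $\Lex(\Ee)^c$ is a monomorphism in $\Lex(\Ee)$, using that every object of $\Lex(\Ee)$ is a filtered colimit of compacts, and then comparing the two kernels), but a cleaner fix avoids the issue entirely: given $f\colon B\to A$, take the kernel $k\colon K\to Y_\Ee(B)$ of $Y_\Ee(f)$ inside the abelian category $Q_l(\Ee)$, choose a left exact presentation $E_0\to E_1\xrightarrow{p}K\to 0$ as in the definition of a dense extension, and check directly, using property (1) of that presentation, that $kp\colon E_1\to B$ is an Ext-kernel of $f$. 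This yields ``$Q_l(\Ee)$ abelian $\Rightarrow$ $\Ee$ left Ext-coherent'' without any claim about ambient kernels.
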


Let us state a few immediate corollaries. 

\begin{theorem}\label{cor:props1} \begin{enumerate}
\item Let $(\Ee, \epsilon)$ be an exact category, where $\epsilon$ denotes the class of conflations. Let $(\Ee, \epsilon')$ be an exact category with the same underlying additive category $\Ee$, but a stronger exact structure, i.e. $\epsilon \subseteq \epsilon'$. If $(\Ee, \epsilon)$ possess  a right abelian envelope, then so does $(\Ee, \epsilon')$. 
    \item If $\Ee$ is left coherent, then $\Ee$ has a right abelian envelope (for the case of the split exact structure this is stated in \cite{BB}, Proposition 4.14).\footnote{The author has been made aware that A. Bondal and A. Pavlov have also independently obtained this statement.} 
    \item Let $\Ee$ be an additive category with the split exact structure. Then $\Ee$ has a right abelian envelope if and only if it is left coherent.
 \end{enumerate}
\end{theorem}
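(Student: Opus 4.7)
The plan is to reduce all three parts to Theorem \ref{thm:extkers} (combined with Corollary \ref{cor:env_exists}), which characterises the existence of $\Aa_r(\Ee)$ by the left Ext-coherence of $\Ee$. Each part then becomes a direct comparison of the Ext-kernel condition under a change of exact structure or against the weak-kernel condition.

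For (1), I would observe that enlarging the class of conflations from $\epsilon$ to $\epsilon' \supseteq \epsilon$ only enlarges the class of deflations. Hence if $g \colon C \to B$ is an Ext-kernel of $f \colon B \to A$ in $(\Ee, \epsilon)$, it remains one in $(\Ee, \epsilon')$: the condition $fg = 0$ is structure-independent, and for every $g' \colon C' \to B$ with $fg' = 0$ the $\epsilon$-deflation $d$ produced by left Ext-coherence of $(\Ee, \epsilon)$ is a fortiori an $\epsilon'$-deflation. So left Ext-coherence is inherited, and (1) follows from Theorem \ref{thm:extkers}.

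For (2), the key point is that a weak kernel $g$ of $f$ is automatically an Ext-kernel for any exact structure on $\Ee$: axiom (1) of a left exact category says identities are deflations, so one may take $d = \id$, and then $g'd = g'$ factors through $g$ by the defining property of a weak kernel. Thus left coherence implies left Ext-coherence regardless of the exact structure, and Theorem \ref{thm:extkers} again gives the result.

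For (3), one direction is already covered by (2). For the converse, I would use that in the split exact structure the deflations are precisely the split epimorphisms. Given $f \colon B \to A$ with Ext-kernel $g \colon C \to B$ and any $g' \colon C' \to B$ with $fg' = 0$, choose a split epi $d \colon D \to C'$ with $g'd = gh$ for some $h$, and pick a section $s \colon C' \to D$ of $d$; then $g' = g'ds = ghs$ factors through $g$, exhibiting $g$ as a weak kernel of $f$. I anticipate no real difficulty here: once Theorem \ref{thm:extkers} is available, all three parts are essentially formal, relying only on the facts that identities are deflations and that split epimorphisms admit sections.
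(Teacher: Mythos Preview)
Your proposal is correct and follows exactly the paper's approach: all three parts are reduced to Theorem \ref{thm:extkers} via the elementary observations that enlarging the deflations preserves Ext-kernels, that weak kernels are Ext-kernels (take $d=\id$), and that for the split structure the two notions coincide (using a section of the split-epi $d$). The only addition in the paper is a second, alternative argument for (2): when $\Ee$ is left coherent, $\modl(\Ee)$ is abelian, so $Q_l(\Ee)=\modl(\Ee)/\defl(\Ee)$ is abelian as a Serre quotient.
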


\begin{proof}
    \begin{enumerate}
        \item Indeed, this is an immediate consequence of Theorem \ref{thm:extkers}, since adding more deflations can only create more Ext-kernels. 
        \item A weak kernel is always an Ext-kernel, so left coherent categories are left Ext-coherent. Another way to see it is by observing that if $\Ee$ is left-coherent, then $Q_l(\Ee)$ is abelian as a quotient of the abelian category $\modl(\Ee)$ by a Serre subcategory. 
        \item Observe that for $\Ee$ endowed with the split exact structure, Ext-kernels are exactly the same as weak kernels.
    \end{enumerate}
\end{proof}

\section{Examples}

\begin{propos}
    
\begin{enumerate}
 
    \item Let $R$ be a commutative local Cohen-Macauley ring. Then the category of maximal Cohen-Macauley modules $\MCM(R)$ has both a right and a left abelian envelope for any exact structure. 
    \item Let $X$ be a noetherian scheme with a resolution property (i.e. every coherent sheaf is a quotient of a locally free one). Let $\Ee := \Bun(X)$ be the category of locally free sheaves on $X$, considered as a fully exact subcategory of $\Coh(X)$. The category $\Ee$ has a right abelian envelope, namely, $\Coh(X)$. The category $\Bun(X)$ endowed with any stronger exact structure (e.g. obtained by removing points from $X$) also possess a right abelian envelope. 
    \item Let $X$ be a smooth projective variety with $\dim(X) = 1$ or $2$. Then the category $\Bun(X)$ of locally free sheaves has a right abelian envelope when endowed with {\it any} exact structure. On the other hand, for $\dim(X) \geq 3$ this is not true in general, e.g. for $X = \P^3$. 
    \item Let $X$ be a smooth projective variety. Consider the subcategory $\Ee$ of $\Bun(X)$ consisting of bundles of the form $\bigoplus_{i} \O_X(i)^{d_i}$. One can show that $\Ee$, unlike $\Bun(X)$, is always left coherent, hence it has a right abelian envelope with any exact structure. In particular, $$\Aa_r(\Ee, \epsilon_{\text{split}}) = \modl(\Ee) \cong \modl_{fp}(R_X) \cong \Coh_{\kk^*}(\A X)$$

    where $R_X$ denotes the homogeneous coordinate ring of $X$ and $\Coh_{k^*}(\A X)$ the category of equivariant sheaves on the affine cone over $X$. Note that the right abelian envelope of $\Ee$ with the exact structure inherited from $\Coh(X)$ is again $\Coh(X)$ because any coherent sheaf is a quotient of a vector bundle from $\Ee$ (see \cite{BB}, Theorem 4.11). 
\end{enumerate}

\end{propos}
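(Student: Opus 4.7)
The overall strategy is to reduce existence of each envelope to a left coherence condition via Theorem \ref{cor:props1} and Theorem \ref{thm:extkers}, and to identify concrete envelopes using Theorem \ref{thm:connection} together with Rump's dense-extension criterion (Theorem \ref{thm:unique_extension}).

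For (1), I would show $\MCM(R)$ is both left and right coherent as a full subcategory of finitely generated $R$-modules. Given $f: M \to N$ in $\MCM(R)$ with kernel $K \subset M$, standard MCM-approximation techniques (Auslander-Buchweitz) produce an MCM module $M' \twoheadrightarrow K$ such that $M' \to M$ is a weak kernel in $\MCM(R)$; the dual argument gives weak cokernels. Theorem \ref{cor:props1}(2) together with its dual then yields both envelopes for the split structure, and Theorem \ref{cor:props1}(1) upgrades to any exact structure. For (2), the resolution property shows $\Bun(X)$ is left coherent: given $f: E \to F$ of bundles, any bundle surjection onto $\Ker(f) \subset E$ composes to a weak kernel, so Theorem \ref{cor:props1}(2)--(1) produce envelopes for the split, inherited, and any stronger structure. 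To identify the inherited envelope with $\Coh(X)$, I would verify that $\Bun(X) \hookrightarrow \Coh(X)$ is a dense extension (Theorem \ref{thm:unique_extension}): presentations come from the resolution property, and the two conditions are checked by forming pullbacks in $\Coh(X)$ along $p: E_1 \to F$ and using the resolution property to cover them by bundles, refining the covers so that the resulting kernels are locally free.

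For (3), Theorem \ref{cor:props1}(1) reduces to the split structure, and \ref{cor:props1}(3) further to left coherence of $\Bun(X)$. On a smooth curve, the kernel of any bundle map is torsion-free, hence locally free. On a smooth surface I would argue that the reflexive hull $K^{**}$ of $K = \Ker(f)$ is locally free (reflexive sheaves on smooth surfaces are locally free), the map $K^{**} \to E$ exists because $E$ is reflexive, and $K^{**} \to F$ vanishes because $K^{**}/K$ is supported in codim $\geq 2$ while $F$ is torsion-free; any bundle map $V \to K$ factors through $V \to K \to K^{**}$ by naturality of double-dualization, so $K^{**} \to E$ is a weak kernel. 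For $X = \P^3$, I would exhibit an explicit bundle map such as $\O^3 \to \O(1)$ given by three linear forms with a common zero $p$: the coherent kernel jumps in rank at $p$, and one proves that no bundle with a map into this kernel satisfies the weak-kernel factorization property, so $\Bun(\P^3)$ fails to be left coherent. For (4), I identify $\Ee$ with finitely generated free graded modules over $R_X = \bigoplus_{n \geq 0} \HH^0(X, \O_X(n))$; noetherianness of $R_X$ makes finitely presented graded $R_X$-modules abelian, so $\Ee$ is left coherent and Theorem \ref{cor:props1}(2) yields the envelope with any exact structure. For the split structure the envelope is $\modl(\Ee) \cong \modl_{fp}(R_X) \cong \Coh_{\kk^*}(\A X)$; for the inherited structure one runs the dense-extension argument of (2), since every coherent sheaf on $X$ is a quotient of an object of $\Ee$ (\cite{BB}, Theorem 4.11).

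The main obstacles will be the $\P^3$ counterexample in (3), which requires an explicit analysis showing that bundle-valued maps into a non-reflexive coherent sheaf cannot be simultaneously factored through a single bundle, and the dense-extension verification in (2) (and hence (4)), since deflations in the inherited exact structure require bundle kernels, which the resolution property does not immediately supply.
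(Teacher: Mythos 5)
Your overall route (reduce everything to (weak/Ext-)kernel conditions via Theorem \ref{cor:props1}, Auslander--Buchweitz approximations for $\MCM(R)$, reflexivity of kernels for $\dim X\le 2$, and graded modules over $R_X$ for item (4)) matches the paper's, which simply cites Holm for (1), \cite{BB} Corollary 4.12 for (2), and ``$\Bun(X)$ has kernels'' for low dimensions in (3). But there is a genuine error in your treatment of (2): the claim that the resolution property makes $\Bun(X)$ left coherent, because ``any bundle surjection onto $\Ker(f)\subset E$ composes to a weak kernel,'' is false. If $V\twoheadrightarrow \Ker(f)$ is a bundle cover and $g: W\to E$ is a bundle map with $fg=0$, then $g$ factors through $\Ker(f)$, but there is no reason the map $W\to \Ker(f)$ lifts through $V\twoheadrightarrow \Ker(f)$: vector bundles are not projective objects of $\Coh(X)$, so the relevant $\Ext^1$ obstruction need not vanish. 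Indeed your claim would contradict item (3) of the very proposition you are proving: $\P^3$ has the resolution property, yet $\Bun(\P^3)$ has no weak kernels, and by Theorem \ref{cor:props1}.3) no right abelian envelope for the split structure. Fortunately the conclusion of (2) does not need left coherence: your dense-extension verification for $\Bun(X)\hookrightarrow\Coh(X)$ (pull back along $p$, cover by a bundle; note that any surjection of bundles is automatically a deflation, so no ``refinement of covers'' is needed) identifies $\Coh(X)$ with $Q_l$ of the geometric structure, and Theorem \ref{cor:props1}.1) then handles all stronger structures --- which is exactly the paper's argument, with the dense-extension step replaced by a citation of \cite{BB}.

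Two further points. For the $\P^3$ statement in (3) you promise an explicit counterexample ($\O^3\to\O(1)$ by three linear forms with a common zero) but the decisive step --- that no single bundle mapping to the kernel sheaf can absorb all bundle maps killing the composite --- is left as ``one proves that\dots''; this is precisely the nontrivial content, which the paper does not prove either but attributes to Bondal--Pavlov (private communication), so as written your proof of (3) is incomplete at the same spot. Finally, minor simplifications: in (3) for surfaces the kernel $K$ of a map of bundles is a second syzygy, hence already reflexive, so $K=K^{**}$ is locally free and is an honest kernel in $\Bun(X)$ (the paper's ``$\Bun(X)$ has kernels''), making the weak-kernel detour unnecessary; your items (1) and (4) are essentially the paper's arguments (the paper cites Holm's Proposition 4.2 for weak kernels and cokernels in $\MCM(R)$, and leaves (4) at the level of the identification with finitely generated graded $R_X$-modules that you spell out).
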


\begin{proof}
    \begin{enumerate}
        \item This follows from Theorem \ref{cor:props1}.2) and the fact that $\MCM(R)$ has weak kernels and cokernels. The latter is established by Holm in \cite{H}, Proposition 4.2, basing on Auslander-Buchweitz’s maximal Cohen–Macaulay approximations \cite{AB}. 
        \item  The fact that $\Aa_r(\Ee) = \Coh(X)$ when $\Ee$ is endowed with the geometric exact structure is Corollary 4.12 of \cite{BB}. The rest follows from Theorem \ref{cor:props1}.1). 
        \item For $\dim(X) = 1$ or $2$, the category $\Bun(X)$ has kernels, hence it possesses a right abelian envelope for any exact structure. On the other hand, the category $\Bun(\P^3)$ does not have weak kernels (Bondal-Pavlov, via private communication). Hence, by Theorem \ref{cor:props1}.3), it has no right abelian envelope when considered with the split exact structure. 
    \end{enumerate}
\end{proof}

\begin{remark}
    I would like to thank Alexey Bondal for drawing my attention to the fact that $\Bun(\P^3)$ does not have weak kernels. 
\end{remark}

\bibliography{bibs}

\providecommand{\bysame}{\leavevmode\hbox to3em{\hrulefill}\thinspace}
\providecommand{\MR}{\relax\ifhmode\unskip\space\fi MR }
\providecommand{\MRhref}[2]{%
  \href{http://www.ams.org/mathscinet-getitem?mr=#1}{#2}
}
\providecommand{\href}[2]{#2}
\begin{thebibliography}{10}

\bibitem{AB}
Maurice Auslander and Ragnar-Olaf Buchweitz, \emph{The homological theory of
  maximal {C}ohen-{M}acaulay approximations}, no.~38, 1989, Colloque en
  l'honneur de Pierre Samuel (Orsay, 1987), pp.~5--37.

\bibitem{BB}
Agnieszka Bodzenta and Alexey Bondal, \emph{Abelian envelopes of exact
  categories and highest weight categories}, Mathematische Zeitschrift
  \textbf{308} (2024), no.~8.

\bibitem{B}
Siegfried Breitsprecher, \emph{Lokal endlich pr\"asentierbare
  {G}rothendieck-{K}ategorien}, Mitt. Math. Sem. Giessen \textbf{85} (1970),
  1--25. \MR{262330}

\bibitem{CB}
William Crawley-Boevey, \emph{Locally finitely presented additive categories},
  Comm. Algebra \textbf{22} (1994), no.~5, 1641--1674.

\bibitem{HKvR}
Ruben Henrard, Sondre Kvamme, and Adam-Christiaan van Roosmalen,
  \emph{Auslander's formula and correspondence for exact categories}, Adv.
  Math. \textbf{401} (2022), Paper No. 108296, 65.

\bibitem{H}
Henrik Holm, \emph{The category of maximal {C}ohen-{M}acaulay modules as a ring
  with several objects}, Mediterr. J. Math. \textbf{13} (2016), no.~3,
  885--898.

\bibitem{LK}
Dmitry Kaledin and Wendy Lowen, \emph{Cohomology of exact categories and
  (non-)additive sheaves}, Advances in Mathematics \textbf{272} (2015),
  652--698.

\bibitem{Kel3}
Bernhard Keller, \emph{Chain complexes and stable categories}, Manuscripta
  Math. \textbf{67} (1990), no.~4, 379--417.

\bibitem{O}
Yasuaki Ogawa, \emph{Auslander's defects over extriangulated categories: an
  application for the general heart construction}, J. Math. Soc. Japan
  \textbf{73} (2021), no.~4, 1063--1089.

\bibitem{Q}
Daniel Quillen, \emph{Higher algebraic {$K$}-theory. {I}}, Algebraic
  {$K$}-theory, {I}: {H}igher {$K$}-theories ({P}roc. {C}onf., {B}attelle
  {M}emorial {I}nst., {S}eattle, {W}ash., 1972), Lecture Notes in Math., vol.
  Vol. 341, Springer, Berlin-New York, 1973, pp.~85--147.

\bibitem{R07}
Wolfgang Rump, \emph{One-sided {G}rothendieck quotients}, Arch. Math. (Basel)
  \textbf{89} (2007), no.~2, 131--142.

\bibitem{R10}
\bysame, \emph{Locally finitely presented categories of sheaves}, J. Pure Appl.
  Algebra \textbf{214} (2010), no.~2, 177--186.

\bibitem{R20}
\bysame, \emph{The abelian closure of an exact category}, Journal of Pure and
  Applied Algebra \textbf{224} (2020), no.~10.

\bibitem{R21}
\bysame, \emph{Exact categories and infinite tilting}, Comm. Algebra
  \textbf{49} (2021), no.~7, 3034--3065.

\bibitem{S}
Marco Schlichting, \emph{Negative {$K$}-theory of derived categories}, Math. Z.
  \textbf{253} (2006), no.~1, 97--134.

\end{thebibliography}
\bibliographystyle{amsplain}
\end{document}